\newtheorem{theorem}{Theorem}[section]
\newtheorem{corollary}[theorem] {Corollary}
\newtheorem{definition}[theorem]{Definition}
\newtheorem{example}[theorem]{Example}
\newtheorem{question}[theorem] {Question}
\title{This is the title}
\begin{document}
\begin{center}
{\Large{\bf{Expansion of approximate Bessel sequences to approximate Schauder frames for Banach spaces}}}\\
K. Mahesh Krishna and P. Sam Johnson \\
Department of Mathematical and Computational Sciences\\ 
National Institute of Technology Karnataka (NITK), Surathkal\\
Mangaluru 575 025, India  \\
Emails: kmaheshak@gmail.com,  sam@nitk.edu.in\\

Date: \today
\end{center}

\hrule
\vspace{0.5cm}
%--------------------------------------
\textbf{Abstract}: It is known  in Hilbert space frame theory that a Bessel sequence can be expanded to a frame. Contrary to Hilbert space situation, using a result of Casazza and Christensen, we show that there are Banach spaces and approximate Bessel sequences which can not be expanded to approximate Schauder frames. We characterize Banach spaces in which one can expand approximate Bessel sequences to approximate Schauder frames.

\textbf{Keywords}:  Approximate Schauder Frame, Approximate Bessel Sequence, Expansion.

\textbf{Mathematics Subject Classification (2020)}: 42C15.

%\tableofcontents

\section{Introduction}
 A sequence $\{\tau_n\}_n$ in a separable Hilbert space  $\mathcal{H}$ over $\mathbb{K}$ ($\mathbb{R}$ or $\mathbb{C}$) is said to be a Bessel sequence 
 for $\mathcal{H}$ if there exists $b>0$ such that $\sum_{n=1}^\infty |\langle h, \tau_n\rangle|^2\leq b\|h\|^2, $ $ \forall h \in \mathcal{H} $ and $\{\tau_n\}_n$ is said to be a frame for $\mathcal{H}$ (refer \cite{DUFFINSCHAEFFER, OLE, HEIL} for the basic theory of Bessel sequences and frames)
 if there exist $a,b>0$ such that $a\|h\|^2 \leq \sum_{n=1}^\infty |\langle h, \tau_n\rangle|^2\leq b\|h\|^2, $ $ \forall h \in \mathcal{H}.$ It follows trivially that  every frame is a Bessel sequence. However, a Bessel sequence need not be a frame. Thus it is natural to ask the following question: Given a Bessel sequence, can we add extra elements to it so that the resulting sequence is a frame? Answer is positive. This was first obtained in the theory of operator-valued frames/G-frames \cite{SUN, KAFTAL, SUN2}  by Li and Sun \cite{LISUN} whose particular case says that it is possible to expand every Bessel sequence to a frame. For  finite dimensional Hilbert spaces, an independent proof  was given by Casazza and Leonhard \cite{CASAZZALEONHARD}. Following these works,   expansions of Bessel sequences to dual frames \cite{CHRISTENSENKIMKIM, KOOLIM, BAKICBERIC}, shift-invariant Bessel sequences to frames for $\mathcal{L}^2(\mathbb{R})$ \cite{BOWNIKYU} and UEP-type Bessel sequences (UEP stands for unitary extension principle) to Parseval frames for $\mathcal{L}^2(\mathbb{R})$  \cite{CHRISTENKIMKIMUNITARY} are studied. \\
In this paper, we show negatively that it is not possible to expand approximate Bessel sequences to approximate Schauder frame (ASF) in every Banach space (Corollary \ref{NOT}). We simultaneously  characterize Banach spaces in which approximate Bessel sequences can be expanded to ASFs (Theorem \ref{CHARBESSELTOFRAME}).
\section{Expansions of approximate Bessel sequences for Banach spaces}
 We begin with  the definition of ASF. 
\begin{definition}\cite{FREEMANODELL, THOMAS}\label{ASFDEF}
	Let $\{\tau_n\}_n$ be a sequence in a Banach space  $\mathcal{X}$ and 	$\{f_n\}_n$ be a sequence in  $\mathcal{X}^*$ (dual of $\mathcal{X}$). The pair $ (\{f_n \}_{n}, \{\tau_n \}_{n}) $ is said to be an ASF for $\mathcal{X}$ if the frame operator $	S_{f, \tau}:\mathcal{X}\ni x \mapsto S_{f, \tau}x\coloneqq \sum_{n=1}^\infty
	f_n(x)\tau_n \in
	\mathcal{X}$ is a well-defined bounded linear, invertible operator. Let $\lambda \in \mathbb{K}\setminus \{0\}$. An ASF $ (\{f_n \}_{n}, \{\tau_n \}_{n}) $ is said to be $\lambda$-tight if $S_{f, \tau}=\lambda I_\mathcal{X}$, where $I_\mathcal{X}$ is the identity operator on $\mathcal{X}$.
\end{definition} 
A routine Hilbert space argument shows that a sequence $\{\tau_n\}_n$ is  a Bessel sequence for  Hilbert space  $\mathcal{H}$ if and only if the map $ S_\tau :\mathcal{H} \ni h \mapsto \sum_{n=1}^\infty \langle h, \tau_n\rangle\tau_n\in
\mathcal{H} $ is a well-defined bounded linear operator. In fact, if $\{\tau_n\}_n$ is a Bessel sequence, then both maps $\theta_\tau:\mathcal{H} \ni h \mapsto \theta_\tau h \coloneqq\{\langle h, \tau_n\rangle \}_n \in \ell^2(\mathbb{N})$ and $\theta_\tau^*:\ell^2(\mathbb{N}) \ni \{a_n\}_n \mapsto \theta_\tau^*\{a_n\}_n\coloneqq \sum_{n=1}^{\infty}a_n\tau_n \in \mathcal{H}$  are well-defined bounded linear operators (Chapter 3   in \cite{OLE}). Now $\theta_\tau^*\theta_\tau=S_\tau$ and hence $S_\tau$ is a  well-defined bounded linear operator. Conversely, let $S_\tau$ be a  well-defined bounded linear operator. Definition of $S_\tau$ says that it is a positive operator. Thus there exists $b>0$ such that $\langle S_\tau h, h \rangle \leq b\|h\|^2$, $\forall h \in \mathcal{H}$. Again using the definition of $S_\tau$ gives that  $\{\tau_n\}_n$ is a Bessel sequence. This observation and Definition \ref{ASFDEF} make us to define the following.
 \begin{definition}
 	Let $\{\tau_n\}_n$ be a sequence in a Banach space  $\mathcal{X}$ and 	$\{f_n\}_n$ be a sequence in  $\mathcal{X}^*$. The pair $ (\{f_n \}_{n}, \{\tau_n \}_{n}) $ is said to be an approximate Bessel sequence  for $\mathcal{X}$ if  $ S_{f, \tau}:\mathcal{X}\ni x \mapsto S_{f, \tau}x\coloneqq \sum_{n=1}^\infty
 	f_n(x)\tau_n \in
 	\mathcal{X}$	is a well-defined bounded linear operator.
 \end{definition} 
We next recall the reconstruction property of Banach spaces.
\begin{definition}\cite{CASAZZARECONSTRUCTION}
A Banach space $\mathcal{X}$ is said to have the reconstruction property  if there exists a sequence $\{\tau_n\}_n$   in  $\mathcal{X}$ and a sequence 	$\{f_n\}_n$  in  $\mathcal{X}^*$ such that $x=\sum_{n=1}^\infty
f_n(x)\tau_n ,  \forall x \in \mathcal{X}.$
\end{definition}
Using approximation property of Banach spaces   \cite{CASAZZAAPPROXIMATION} Casazza and Christensen proved the following result.
\begin{theorem}\cite{CASAZZARECONSTRUCTION}\label{RECTHEOREM}
There exists a Banach space $\mathcal{X}$ such that $\mathcal{X}$ does not have the reconstruction property.	
\end{theorem}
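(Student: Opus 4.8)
The plan is to show that the reconstruction property forces the bounded approximation property, and then to appeal to the known existence of a Banach space lacking the approximation property.

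First I would suppose that $\mathcal{X}$ has the reconstruction property, witnessed by $\{\tau_n\}_n \subseteq \mathcal{X}$ and $\{f_n\}_n \subseteq \mathcal{X}^*$, and introduce the partial reconstruction operators $S_N \colon \mathcal{X} \to \mathcal{X}$, $S_N x \coloneqq \sum_{n=1}^{N} f_n(x)\tau_n$. Each $S_N$ is a bounded operator of finite rank, and the identity $x = \sum_{n=1}^\infty f_n(x)\tau_n$ says exactly that $S_N x \to x$ in norm for every $x \in \mathcal{X}$. Hence $\{S_N x\}_N$ is bounded for each fixed $x$, and the uniform boundedness principle yields $C \coloneqq \sup_N \|S_N\| < \infty$. (Note also that $x \in \overline{\operatorname{span}}\{\tau_n\}_n$ for all $x$, so $\mathcal{X}$ is separable.)

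Next I would upgrade this pointwise convergence to uniform convergence on compact sets by the standard three-$\varepsilon$ estimate: given a compact $K \subseteq \mathcal{X}$ and $\varepsilon > 0$, cover $K$ by finitely many balls of radius $\delta < \varepsilon/(3(C+1))$ about points $x_1,\dots,x_k$, choose $N$ with $\|S_N x_i - x_i\| < \varepsilon/3$ for all $i$, and bound $\|S_N x - x\| \le \|S_N(x-x_i)\| + \|S_N x_i - x_i\| + \|x_i - x\|$ for the appropriate $i$. This shows that $I_\mathcal{X}$ is approximated, uniformly on compacta, by finite-rank operators of norm at most $C$; that is, $\mathcal{X}$ has the bounded approximation property.

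Finally, I would invoke Enflo's celebrated construction of a separable Banach space that fails the approximation property (see the discussion in \cite{CASAZZAAPPROXIMATION}); such a space \emph{a fortiori} fails the bounded approximation property, and hence, by the contrapositive of the implication just established, cannot have the reconstruction property, which proves the theorem. The one genuinely hard input is the existence of an AP-failing space, which I would quote rather than reprove; the passage from the reconstruction property to the bounded approximation property is elementary and is where the (light) work of the argument actually sits.
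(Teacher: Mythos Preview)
The paper does not supply its own proof of this theorem; it is quoted from \cite{CASAZZARECONSTRUCTION}, and the sentence preceding the statement indicates that the cited argument proceeds via the approximation property of Banach spaces. Your proposal is correct and follows exactly that route: the partial sums $S_N$ are finite-rank, converge pointwise to the identity, are uniformly bounded by the uniform boundedness principle, and hence approximate $I_\mathcal{X}$ uniformly on compacta, so the reconstruction property implies the bounded approximation property; Enflo's space then furnishes the required counterexample.
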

 Now we have the following characterization.
\begin{theorem}\label{CHARBESSELTOFRAME}
	Let 	$ (\{f_n \}_{n}, \{\tau_n \}_{n}) $ be an approximate  Bessel sequence for $\mathcal{X}$. Then the following are equivalent.
	\begin{enumerate}[\upshape(i)]
		\item $ (\{f_n \}_{n}, \{\tau_n \}_{n}) $ can be expanded to an ASF for $\mathcal{X}$.
		\item $\mathcal{X}$ has the reconstruction property. 
	\end{enumerate}	 
\end{theorem}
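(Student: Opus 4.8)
The plan is to establish the two implications separately; (i) $\Rightarrow$ (ii) is essentially immediate, and the real work lies in (ii) $\Rightarrow$ (i).

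For (i) $\Rightarrow$ (ii): assume $(\{f_n\}_n,\{\tau_n\}_n)$ has been expanded to an ASF, say the pair $(\{f_n\}_n\cup\{g_k\}_k,\{\tau_n\}_n\cup\{\omega_k\}_k)$ whose frame operator $S$ is bounded and invertible. I would replace every vector $v$ occurring in the expanded family by $S^{-1}v$, leaving the functionals unchanged, and use boundedness and linearity of $S^{-1}$ to pull it through the (convergent) defining series, obtaining $\sum f_n(x)S^{-1}\tau_n+\sum g_k(x)S^{-1}\omega_k=S^{-1}Sx=x$ for every $x\in\mathcal{X}$. Hence $\mathcal{X}$ has the reconstruction property.

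For (ii) $\Rightarrow$ (i): let $\{h_n\}_n\subseteq\mathcal{X}^*$ and $\{g_n\}_n\subseteq\mathcal{X}$ witness the reconstruction property, so that $\sum_n h_n(x)g_n=x$ for all $x$; in the language of Definition \ref{ASFDEF} the pair $(\{h_n\}_n,\{g_n\}_n)$ has frame operator $I_\mathcal{X}$. Put $T:=S_{f,\tau}$, a bounded linear operator by the approximate Bessel hypothesis, and fix a scalar $c$ with $|c|>\|T\|$, so that $T+cI_\mathcal{X}=c\,(I_\mathcal{X}+c^{-1}T)$ is invertible by the Neumann series. The proposed expansion is the pair obtained by appending the scaled reconstruction system $(\{c h_n\}_n,\{g_n\}_n)$ to $(\{f_n\}_n,\{\tau_n\}_n)$, say in the interleaved enumeration with functionals $f_1,c h_1,f_2,c h_2,\dots$ and vectors $\tau_1,g_1,\tau_2,g_2,\dots$.

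To see that this pair is an ASF one computes its frame-operator series: the even partial sums equal $\sum_{n=1}^{N}f_n(x)\tau_n+c\sum_{n=1}^{N}h_n(x)g_n$, which converges to $Tx+cx$, while the odd partial sums differ from these by a single term $f_{N+1}(x)\tau_{N+1}$ that tends to $0$ because the series $\sum_n f_n(x)\tau_n$ converges. Hence the merged series converges for each $x$ to $(T+cI_\mathcal{X})x$, a well-defined bounded invertible operator, so the expanded pair is indeed an ASF. I expect the only delicate point to be this convergence bookkeeping for the merged series — one must argue convergence of the union in its chosen order, rather than merely of the two sub-series, and observe that any reasonable interleaving produces the same limit — but it is handled by the elementary fact that the terms of a convergent series vanish.
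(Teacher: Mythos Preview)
Your proof is correct. The direction (i) $\Rightarrow$ (ii) matches the paper's argument exactly: apply $S^{-1}$ to the vectors of the expanded ASF and pull the bounded inverse through the convergent series.

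For (ii) $\Rightarrow$ (i) you take a genuinely different route. The paper, given a reconstruction pair $(\{g_n\},\{\omega_n\})$ with $\sum g_n(x)\omega_n=x$, appends the functionals $h_n\coloneqq g_n$ unchanged but pushes the vectors through $I_\mathcal{X}-S_{f,\tau}$, setting $\rho_n\coloneqq(I_\mathcal{X}-S_{f,\tau})\omega_n$; the combined frame operator is then $S_{f,\tau}+(I_\mathcal{X}-S_{f,\tau})=I_\mathcal{X}$, so the expansion is even $1$-tight and no invertibility check is needed. Your approach instead leaves the vectors alone and scales the functionals by a large constant $c$, producing the frame operator $S_{f,\tau}+cI_\mathcal{X}$, whose invertibility you secure via a Neumann-series argument. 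Both are valid; the paper's trick is slightly slicker in that it yields a tight ASF with no auxiliary parameter, while your version has the minor virtue of keeping the appended vectors untouched. Your explicit interleaving bookkeeping is a point the paper leaves implicit, and your handling of it (even versus odd partial sums, with the stray term vanishing because the Bessel series converges) is correct; the parenthetical remark that ``any reasonable interleaving produces the same limit'' is inessential, since exhibiting one convergent enumeration suffices.
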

\begin{proof}
\begin{enumerate}[\upshape(i)]
	\item $\Rightarrow $ (ii) Let $\{\omega_n\}_n$ be a sequence in   $\mathcal{X}$ and 	$\{g_n\}_n$ be a sequence in  $\mathcal{X}^*$ such that $ (\{f_n \}_{n}\cup \{g_n \}_{n}, \{\tau_n \}_{n}\cup\{\omega_n \}_{n} ) $ is an ASF   for $\mathcal{X}$. Let $S_{(f,g), (\tau, \omega)}$ be the frame operator for $ (\{f_n \}_{n}\cup \{g_n \}_{n}, \{\tau_n \}_{n}\cup\{\omega_n \}_{n} ) $. Then 
	\begin{align*}
	x&=S_{(f,g), (\tau, \omega)}^{-1}S_{(f,g), (\tau, \omega)}x=S_{(f,g), (\tau, \omega)}^{-1}\left( \sum_{n=1}^{\infty}f_n(x)\tau_n+\sum_{n=1}^{\infty}g_n(x)\omega_n\right)\\
	&=\sum_{n=1}^{\infty}f_n(x)S_{(f,g), (\tau, \omega)}^{-1}\tau_n+\sum_{n=1}^{\infty}g_n(x)S_{(f,g), (\tau, \omega)}^{-1}\omega_n, \quad \forall x \in \mathcal{X} 
	\end{align*}
	which shows that $\mathcal{X}$ has the reconstruction property.
	\item $\Rightarrow $ (i) Let $\{\omega_n\}_n$ be a sequence in   $\mathcal{X}$ and  	$\{g_n\}_n$  be a sequence in  $\mathcal{X}^*$  such that 
$x=\sum_{n=1}^\infty
	g_n(x)\omega_n $,  $ \forall x \in \mathcal{X}.$  Define $h_n\coloneqq g_n $, $\rho_n \coloneqq (I_\mathcal{X}-S_{f, \tau})\omega_n $, for all $n \in \mathbb{N}$. Then 
	
	\begin{align*}
	\sum_{n=1}^{\infty}f_n(x)\tau_n+\sum_{n=1}^{\infty}h_n(x)\rho_n&=\sum_{n=1}^{\infty}f_n(x)\tau_n+\sum_{n=1}^{\infty}g_n(x)(I_\mathcal{X}-S_{f, \tau})\omega_n\\
	&=S_{f, \tau}x+(I_\mathcal{X}-S_{f, \tau})\left(\sum_{n=1}^{\infty}g_n(x)\omega_n\right)\\
	&=S_{f, \tau}x+(I_\mathcal{X}-S_{f, \tau})x=x, \quad \forall x \in \mathcal{X}. 
	\end{align*}
	Therefore $ (\{f_n \}_{n}\cup \{h_n \}_{n}, \{\tau_n \}_{n}\cup\{\rho_n \}_{n} ) $ is an ASF   for $\mathcal{X}$.	
\end{enumerate}	
\end{proof}
 Observe that in the proof of Theorem \ref{CHARBESSELTOFRAME} if we define $h_n\coloneqq g_n (I_\mathcal{X}-S_{f, \tau})$, $\rho_n \coloneqq \omega_n $, for all $n \in \mathbb{N}$, then also $ (\{f_n \}_{n}\cup \{h_n \}_{n}, \{\tau_n \}_{n}\cup\{\rho_n \}_{n} ) $ is an ASF   for $\mathcal{X}$. Also note that, for any nonzero scalar $\lambda$, we can define $\rho_n $ in Theorem \ref{CHARBESSELTOFRAME} as $\rho_n \coloneqq (\lambda I_\mathcal{X}-S_{f, \tau})\omega_n $. Thus, there are  infinitely many ways to expand an approximate  Bessel sequence into an ASF. This result for Hilbert spaces are obtained by Li and Sun \cite{LISUN}.
 \begin{corollary}\label{NOT}
 	There exists a Banach space $\mathcal{X}$ such that given any approximate Bessel sequence $ (\{f_n \}_{n}, \{\tau_n \}_{n}) $  for $\mathcal{X}$, $ (\{f_n \}_{n}, \{\tau_n \}_{n}) $ can not be expanded to an ASF for $\mathcal{X}$.
 \end{corollary}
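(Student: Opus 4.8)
The plan is to derive Corollary \ref{NOT} as an immediate consequence of Theorem \ref{CHARBESSELTOFRAME} together with Theorem \ref{RECTHEOREM}. First I would invoke Theorem \ref{RECTHEOREM} to fix a Banach space $\mathcal{X}$ that fails the reconstruction property. The next step is to observe that $\mathcal{X}$ admits \emph{at least one} approximate Bessel sequence at all: the pair $(\{f_n\}_n, \{\tau_n\}_n)$ with $f_n = 0$ and $\tau_n = 0$ for all $n$ trivially has $S_{f,\tau} = 0$, a well-defined bounded linear operator, so the notion is non-vacuous and the quantifier ``given any approximate Bessel sequence'' is over a nonempty set.

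Then I would argue by contraposition via Theorem \ref{CHARBESSELTOFRAME}: suppose, for contradiction, that \emph{some} approximate Bessel sequence $(\{f_n\}_n, \{\tau_n\}_n)$ for $\mathcal{X}$ could be expanded to an ASF for $\mathcal{X}$. That is exactly condition (i) of Theorem \ref{CHARBESSELTOFRAME} holding for that pair. By the equivalence (i)$\Leftrightarrow$(ii) in that theorem, $\mathcal{X}$ would then have the reconstruction property, contradicting the choice of $\mathcal{X}$. Hence no approximate Bessel sequence for $\mathcal{X}$ can be expanded to an ASF, which is precisely the assertion of the corollary.

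I do not expect any genuine obstacle here; the entire content has been front-loaded into Theorem \ref{CHARBESSELTOFRAME} and Theorem \ref{RECTHEOREM}. The only point requiring a moment's care is the logical reading of the quantifiers: the corollary claims that for the witnessing space $\mathcal{X}$, the expansion fails for \emph{every} approximate Bessel sequence, and this follows because condition (ii) of Theorem \ref{CHARBESSELTOFRAME} is a property of $\mathcal{X}$ alone, independent of the particular approximate Bessel sequence chosen, so its failure uniformly obstructs expansion for all of them. If one wished to be slightly more explicit, one could also note that the direction (i)$\Rightarrow$(ii) of Theorem \ref{CHARBESSELTOFRAME} already shows that a single expandable approximate Bessel sequence would suffice to establish the reconstruction property, so its absence is immediate from Theorem \ref{RECTHEOREM}.
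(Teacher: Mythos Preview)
Your proposal is correct and follows essentially the same route as the paper's own proof: invoke Theorem~\ref{RECTHEOREM} to obtain a space without the reconstruction property, then apply the contrapositive of (i)$\Rightarrow$(ii) in Theorem~\ref{CHARBESSELTOFRAME} to conclude that no approximate Bessel sequence can be expanded to an ASF. Your additional remark that the zero pair witnesses non-vacuousness of the quantifier is a harmless extra care not present in the paper, but the argument is otherwise identical.
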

 \begin{proof}
 From Theorem \ref{RECTHEOREM}, there exists a Banach space $\mathcal{X}$ which does not has the reconstruction property. Let $ (\{f_n \}_{n}, \{\tau_n \}_{n}) $ be any approximate Bessel sequence  for $\mathcal{X}$. Theorem \ref{CHARBESSELTOFRAME} now says that $ (\{f_n \}_{n}, \{\tau_n \}_{n}) $ can not be expanded to an ASF for $\mathcal{X}$.	
 \end{proof}
 Following corollary is an easy consequence of Theorem \ref{CHARBESSELTOFRAME}.
\begin{corollary}
	Let 	$ (\{f_n \}_{n}, \{\tau_n \}_{n}) $ be an approximate  Bessel sequence for $\mathcal{X}$. If 	$\mathcal{X}$ admits a Schauder basis, then $ (\{f_n \}_{n}, \{\tau_n \}_{n}) $ can be expanded to an ASF for $\mathcal{X}$.	
\end{corollary}
Note that  Theorem \ref{CHARBESSELTOFRAME} may not add  countably many  elements to an approximate  Bessel sequence to get an ASF. In the following example we show that  it adds just  one element to an approximate Bessel sequence and yields an ASF. 
 \begin{example}\label{EXAMPLEOME}
 	Let $p\in[1,\infty)$. Let $\{e_n\}_n$ denote the standard  Schauder basis for  $\ell^p(\mathbb{N})$  and $\{\zeta_n\}_n$ denote the coordinate functionals associated with $\{e_n\}_n$. Define 
 	\begin{align*}
 	&R: \ell^p(\mathbb{N}) \ni (x_n)_{n=1}^\infty\mapsto (0,x_1,x_2, \dots)\in \ell^p(\mathbb{N}),\\
 	&L: \ell^p(\mathbb{N}) \ni (x_n)_{n=1}^\infty\mapsto (x_2,x_3,x_4, \dots)\in \ell^p(\mathbb{N}).
 	\end{align*}
 	Clearly  $ (\{f_n\coloneqq \zeta_nL\}_{n}, \{\tau_n\coloneqq Re_n\}_{n}) $ is an approximate Bessel sequence  for 	$\ell^p(\mathbb{N})$. Note that  $S_{f, \tau}=RL$ and 
 	\begin{align*}
 &	(I_{\ell^p(\mathbb{N})}-S_{f, \tau})e_1=e_1-RLe_1=e_1-0=e_1,\\
 &	(I_{\ell^p(\mathbb{N})}-S_{f, \tau})e_n=e_n-RLe_n=e_n-Re_{n-1}=e_n-e_n=0, \quad \forall n \geq 2.
 	\end{align*}
 	Let $g_n\coloneqq \zeta_n$ and $\omega_n\coloneqq e_n$, $\forall n \in \mathbb{N}$. Theorem \ref{CHARBESSELTOFRAME} now says that 
 	 $ (\{f_n \}_{n}\cup \{h_1 \}, \{\tau_n \}_{n}\cup\{\rho_1 \} ) $ is an ASF   for $\ell^p(\mathbb{N})$.
 	\end{example}
 It was proved in \cite{LISUN} that every Gabor frame for $\mathcal{L}^2(\mathbb{R})$ can be expanded to a tight frame for $\mathcal{L}^2(\mathbb{R})$ by adding one window function with the same frequency lattice. We now ask  a similar open problem for  $\mathcal{L}^p(\mathbb{R})$. 
 \begin{question}
 Let $p \in (2, \infty)$, $d \in \mathbb{N}$. For $x\in \mathbb{R}^d$, define $T_x:\mathcal{L}^p(\mathbb{R}^d)\ni f \mapsto T_xf \in \mathcal{L}^p(\mathbb{R}^d)$, $T_xf:\mathbb{R}^d \ni t \mapsto  (T_xf)(t)\coloneqq f(x-t)\in \mathbb{C}$. Let $\{\lambda_n \}_{n}$ be an unbounded sequence in $\mathbb{R}^d$. Choose $f \in \mathcal{L}^p(\mathbb{R}^d)$ and a sequence $\{g_n^*\}_n$ in $ (\mathcal{L}^p(\mathbb{R}^d))^*$  such that  $(\{g_n^*\}_n, \{T_{\lambda_n}f\}_n) $ is an ASF for  $ \mathcal{L}^p(\mathbb{R}^d)$ (such a function and a sequence  exist  \cite{FREEMANODELL}). Can  $(\{g_n^*\}_n, \{T_{\lambda_n}f\}_n) $	be expanded to a tight ASF for $ \mathcal{L}^p(\mathbb{R}^d)$ by adding finitely many elements?
 \end{question}
It was derived by Li and Sun in \cite{LISUN} that if a Bessel sequence for a Hilbert space  can be expanded finitely to get a tight frame, then the number of elements added can not be small. We now derive a similar result for Banach spaces. 
\begin{theorem}\label{NUMBERINEQUALITY}
Let 	$ (\{f_n \}_{n}, \{\tau_n \}_{n}) $ be an approximate  Bessel sequence for $\mathcal{X}$. If  $ (\{f_n \}_{n}\cup \{g_k\}_{k=1}^N, \{\tau_n \}_{n}\cup \{\omega_k\}_{k=1}^N) $ is a $\lambda$-tight ASF for $\mathcal{X}$, then 
\begin{align*}
N\geq \dim (\lambda I_\mathcal{X}-S_{f, \tau}) (\mathcal{X}).
\end{align*}
\end{theorem}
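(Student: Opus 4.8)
The plan is to extract from the $\lambda$-tightness hypothesis an explicit identity for the operator $\lambda I_\mathcal{X} - S_{f, \tau}$ and then to bound its rank by the number of added vectors.

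First I would identify the frame operator of the expanded system. Because $(\{f_n\}_n, \{\tau_n\}_n)$ is an approximate Bessel sequence, the series $\sum_{n=1}^\infty f_n(x)\tau_n$ converges for each $x \in \mathcal{X}$; appending the finite sum $\sum_{k=1}^N g_k(x)\omega_k$ does not disturb convergence, so the frame operator $S_{(f,g),(\tau,\omega)}$ of the union $(\{f_n\}_n \cup \{g_k\}_{k=1}^N, \{\tau_n\}_n \cup \{\omega_k\}_{k=1}^N)$ is well defined and satisfies
\[
S_{(f,g),(\tau,\omega)}x = S_{f, \tau}x + \sum_{k=1}^N g_k(x)\omega_k, \qquad \forall x \in \mathcal{X}.
\]
Invoking the hypothesis $S_{(f,g),(\tau,\omega)} = \lambda I_\mathcal{X}$ and rearranging gives
\[
(\lambda I_\mathcal{X} - S_{f, \tau})x = \sum_{k=1}^N g_k(x)\omega_k, \qquad \forall x \in \mathcal{X}.
\]

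Next I would read off the rank bound. For every $x \in \mathcal{X}$, the right-hand side above lies in $\operatorname{span}\{\omega_1, \dots, \omega_N\}$, a subspace of dimension at most $N$. Hence $(\lambda I_\mathcal{X} - S_{f, \tau})(\mathcal{X}) \subseteq \operatorname{span}\{\omega_1, \dots, \omega_N\}$, and comparing dimensions yields $N \geq \dim (\lambda I_\mathcal{X} - S_{f, \tau})(\mathcal{X})$, which is the assertion.

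I do not expect a genuine obstacle here: the content is essentially the observation that a $\lambda$-tight finite expansion forces $\lambda I_\mathcal{X} - S_{f, \tau}$ to be the synthesis operator of the finitely many added vectors $\omega_1, \dots, \omega_N$, hence of rank at most $N$. The only care needed is the routine bookkeeping that the frame operator of the countable union splits as the sum of its two pieces (immediate, since one summand is a finite sum and the other converges by the approximate Bessel hypothesis) and is insensitive to the reindexing of $\{f_n\}_n \cup \{g_k\}_{k=1}^N$ into a single sequence. This parallels the Hilbert space argument of Li and Sun in \cite{LISUN}.
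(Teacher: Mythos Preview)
Your proposal is correct and follows essentially the same route as the paper: split the frame operator of the expanded system as $S_{f,\tau}$ plus the finite-rank piece $x\mapsto\sum_{k=1}^N g_k(x)\omega_k$, use $\lambda$-tightness to identify this finite-rank piece with $\lambda I_\mathcal{X}-S_{f,\tau}$, and bound its rank by $N$. The paper records this more tersely but the argument is the same.
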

\begin{proof}
Let $S_{(f,g), (\tau, \omega)}$ be the frame operator for $ (\{f_n \}_{n}\cup \{g_k\}_{k=1}^N, \{\tau_n \}_{n}\cup \{\omega_k\}_{k=1}^N) $. Set $S_{g, \omega}(x)\coloneqq\sum_{k=1}^{N}g_k(x)\omega_k,  \forall x \in \mathcal{X}$. Then 
\begin{align*}
\lambda x =S_{(f,g), (\tau, \omega)}x=\sum_{n=1}^{\infty}f_n(x)\tau_n+\sum_{k=1}^{N}g_k(x)\omega_k=S_{f, \tau}x+S_{g,\omega}x,\quad \forall x \in \mathcal{X}.
\end{align*}
Therefore 
\begin{align*}
N \geq \dim S_{g, \omega} (\mathcal{X}) = \dim (\lambda I_\mathcal{X}-S_{f, \tau}) (\mathcal{X}).
\end{align*}	
\end{proof}
Even for Hilbert spaces it is known that the inequality in Theorem \ref{NUMBERINEQUALITY} can not be improved \cite{LISUN} which is illustrated in Example \ref{EXAMPLEOME} as well. Nevertheless, there is a well-behaved class of ASFs, known as p-ASFs \cite{MAHESHJOHNSON} behaving better than ASFs for 
switching between the less known Banach space $\mathcal{X}$ and  the well-known Banach space $\ell^p(\mathbb{N})$.  Moreover, we can characterize p-ASFs and their duals. Note that Hilbert space frame theory is more fertile
due to the fact that we can continuously switch between the Hilbert space and the standard separable Hilbert space $\ell^2(\mathbb{N})$.  We now state the definition of a p-approximate Bessel sequence and end the paper with an open question and a partial answer for it.

\begin{definition}
Let $p \in [1, \infty)$.	An approximate Bessel sequence   $ (\{f_n \}_{n}, \{\tau_n \}_{n}) $  for $\mathcal{X}$	is said to be a p-approximate Bessel sequence for $\mathcal{X}$ if both the maps $
	\theta_f: \mathcal{X}\ni x \mapsto \theta_f x\coloneqq \{f_n(x)\}_n \in \ell^p(\mathbb{N}) $ and $
	\theta_\tau : \ell^p(\mathbb{N}) \ni \{a_n\}_n \mapsto \theta_\tau \{a_n\}_n\coloneqq \sum_{n=1}^\infty a_n\tau_n \in \mathcal{X}$
	are well-defined bounded linear operators. 
\end{definition}
\begin{question}
For which classes of Banach spaces, p-approximate Bessel sequences can be expanded to a p-ASF?	
\end{question}
\begin{theorem}
Let $p \in [1, \infty)$.	If    $ (\{f_n \}_{n}, \{\tau_n \}_{n}) $ is a p-approximate Bessel sequence for $\ell^p(\mathbb{N}), $ then 	$ (\{f_n \}_{n}, \{\tau_n \}_{n}) $ can be expanded to a p-ASF.
\end{theorem}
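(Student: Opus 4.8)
The plan is to combine the expansion mechanism already used in the proof of Theorem \ref{CHARBESSELTOFRAME} with the fact that $\ell^p(\mathbb{N})$ carries a particularly rigid reconstruction system, namely its standard unconditional basis. First I would recall that $\{e_n\}_n$ together with the coordinate functionals $\{\zeta_n\}_n$ satisfies $x=\sum_{n=1}^\infty \zeta_n(x)e_n$ for every $x\in\ell^p(\mathbb{N})$, so $\ell^p(\mathbb{N})$ has the reconstruction property; by the (ii)$\Rightarrow$(i) direction of Theorem \ref{CHARBESSELTOFRAME} applied with $g_n=\zeta_n$, $\omega_n=e_n$, the pair $(\{f_n\}_n\cup\{h_n\}_n,\{\tau_n\}_n\cup\{\rho_n\}_n)$ is an ASF for $\ell^p(\mathbb{N})$, where $h_n\coloneqq\zeta_n$ and $\rho_n\coloneqq(I_{\ell^p(\mathbb{N})}-S_{f,\tau})e_n$. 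It then remains only to upgrade ``ASF'' to ``p-ASF'', i.e. to check that the analysis operator of the enlarged system maps into $\ell^p(\mathbb{N})$ and its synthesis operator is bounded on $\ell^p(\mathbb{N})$, after fixing a bijection $\mathbb{N}\to\mathbb{N}\sqcup\mathbb{N}$ (e.g. an interleaving enumeration) which identifies the index set of the union with $\mathbb{N}$ and realizes $\ell^p(\mathbb{N}\sqcup\mathbb{N})$ isometrically as $\ell^p(\mathbb{N})\oplus_p\ell^p(\mathbb{N})$.

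For the analysis side, the map $x\mapsto\{h_n(x)\}_n=\{\zeta_n(x)\}_n$ is, under the canonical identification, the identity on $\ell^p(\mathbb{N})$, hence bounded with values in $\ell^p(\mathbb{N})$; combined with the hypothesis that $\theta_f:x\mapsto\{f_n(x)\}_n$ is bounded from $\ell^p(\mathbb{N})$ into $\ell^p(\mathbb{N})$ because $(\{f_n\}_n,\{\tau_n\}_n)$ is a p-approximate Bessel sequence, the enlarged analysis operator $x\mapsto\{f_n(x)\}_n\cup\{\zeta_n(x)\}_n$ is a well-defined bounded operator into $\ell^p(\mathbb{N})\oplus_p\ell^p(\mathbb{N})\cong\ell^p(\mathbb{N})$. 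For the synthesis side, for $\{b_n\}_n\in\ell^p(\mathbb{N})$ one has $\sum_{n=1}^\infty b_n\rho_n=(I_{\ell^p(\mathbb{N})}-S_{f,\tau})\big(\sum_{n=1}^\infty b_n e_n\big)=(I_{\ell^p(\mathbb{N})}-S_{f,\tau})(\{b_n\}_n)$, where $\sum_{n=1}^\infty b_n e_n$ converges because $\{e_n\}_n$ is a Schauder basis and $I_{\ell^p(\mathbb{N})}-S_{f,\tau}$ is bounded since $S_{f,\tau}$ is bounded; hence $\{b_n\}_n\mapsto\sum_{n=1}^\infty b_n\rho_n$ is bounded on $\ell^p(\mathbb{N})$. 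Together with the boundedness of $\theta_\tau:\{a_n\}_n\mapsto\sum_{n=1}^\infty a_n\tau_n$ coming from the p-approximate Bessel hypothesis, the enlarged synthesis operator $\ell^p(\mathbb{N})\oplus_p\ell^p(\mathbb{N})\ni\{a_n\}_n\cup\{b_n\}_n\mapsto\sum_{n=1}^\infty a_n\tau_n+\sum_{n=1}^\infty b_n\rho_n\in\ell^p(\mathbb{N})$ is bounded.

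Putting these together, the enlarged pair is simultaneously an ASF (by Theorem \ref{CHARBESSELTOFRAME}) and a p-approximate Bessel sequence, hence a p-ASF for $\ell^p(\mathbb{N})$, which is the claim. I do not expect a serious obstacle: all ingredients (reconstruction property of $\ell^p(\mathbb{N})$, boundedness of $I_{\ell^p(\mathbb{N})}-S_{f,\tau}$, the $\ell^p$-direct-sum identification) are routine. The only point needing a little care is purely bookkeeping --- choosing the reindexing of the union so that the $\ell^p$-norm of the combined coefficient sequence is exactly the $\ell^p\oplus_p\ell^p$ norm, so that the component-wise bounds assemble into a single bound; an interleaving enumeration achieves this. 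One could equally run the argument with the alternative expansion $h_n\coloneqq\zeta_n(I_{\ell^p(\mathbb{N})}-S_{f,\tau})$, $\rho_n\coloneqq e_n$ noted after Theorem \ref{CHARBESSELTOFRAME}, with the roles of the two boundedness checks interchanged.
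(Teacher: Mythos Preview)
Your proposal is correct and follows exactly the same construction as the paper: define $h_n\coloneqq\zeta_n$ and $\rho_n\coloneqq(I_{\ell^p(\mathbb{N})}-S_{f,\tau})e_n$, then verify that the enlarged system is a p-ASF. The paper's proof simply asserts ``it follows'' at that point, whereas you have supplied the routine verification of boundedness of the enlarged analysis and synthesis operators and the $\ell^p\oplus_p\ell^p\cong\ell^p$ bookkeeping, so your write-up is strictly more detailed but not different in substance.
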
 
\begin{proof}
Let $ \{e_n \}_{n}$ and $ \{\zeta_n \}_{n}$ be as in Example \ref{EXAMPLEOME}.  Define $h_n\coloneqq \zeta_n $, $\rho_n \coloneqq (I_{\ell^p(\mathbb{N})}-S_{f, \tau})e_n $, for all $n \in \mathbb{N}$. Then  it follows that $ (\{f_n \}_{n}\cup \{h_n \}_{n}, \{\tau_n \}_{n}\cup\{\rho_n \}_{n} ) $ is a p-ASF for  $\ell^p(\mathbb{N}) $.
\end{proof}

 \bibliographystyle{plain}
 \bibliography{reference.bib}

\end{document}